
\documentclass{amsart}
\RequirePackage[utf8]{inputenc}
\usepackage{amsmath,amsthm,amsfonts,amssymb,amscd,amsbsy,dsfont}
\usepackage{graphicx}
\usepackage{hyperref}

\newcommand{\dd}{\mathrm{d}}

\newcommand{\Ric}{\operatorname{Ric}}
\newcommand{\Z}{\mathds Z}

\newcommand{\R}{\mathds R}
\newcommand{\C}{\mathds C}
\newcommand{\Hr}{\mathds H}

\renewcommand{\S}{\mathsf{S}}
\newcommand{\SO}{\mathsf{SO}}
\renewcommand{\O}{\mathsf O}
\newcommand{\SU}{\mathsf{SU}}

\newcommand{\Sp}{\mathsf{Sp}}

\newcommand{\G}{\mathsf{G}}
\newcommand{\K}{\mathsf{K}}
\renewcommand{\H}{\mathsf{H}}
\newcommand{\N}{\mathsf{N}}

\newcommand{\g}{\mathrm g}

\newcommand{\diag}{\operatorname{diag}}

\newcommand{\Ad}{\operatorname{Ad}}
\newcommand{\ggz}{\g_{\rm GZ}}

\newcommand{\zed}{\mathds{Z}}

\allowdisplaybreaks

\hypersetup{
    pdftoolbar=true,
    pdfmenubar=true,
    pdffitwindow=false,
    pdfstartview={FitH},
    pdftitle={Four-dimensional cohomogeneity one Ricci flow and nonnegative sectional curvature},
    pdfauthor={Renato G. Bettiol and Anusha M. Krishnan},
    pdfsubject={},
    pdfkeywords={}
    pdfnewwindow=true,
    colorlinks=true, 
    linkcolor=blue,
    citecolor=blue,
    urlcolor=black,
}

\newtheorem*{theorem*}{Theorem}
\newtheorem{theorem}{Theorem}
\newtheorem{proposition}[theorem]{Proposition}

\newtheorem*{mainthm}{\sc Theorem}
\newtheorem*{maincor}{\sc Corollary}

\theoremstyle{definition}

\theoremstyle{remark}
\newtheorem{remark}[theorem]{Remark}

\title[Four-dimensional cohomogeneity one Ricci flow]{Four-dimensional cohomogeneity one Ricci flow and nonnegative sectional curvature}

\author[R. G. Bettiol]{Renato G. Bettiol}
\author[A. M. Krishnan]{Anusha M. Krishnan}

\address{\begin{tabular}{l}
University of Pennsylvania \\
Department of Mathematics \\
209 South 33rd St \\
Philadelphia, PA, 19104-6395, USA \\[0.3cm]
\emph{E-mail address}: {\tt rbettiol@math.upenn.edu}\\
\emph{E-mail address}: {\tt anushakr@math.upenn.edu}
\end{tabular}
}

\allowdisplaybreaks
\numberwithin{equation}{section}
\numberwithin{theorem}{section}

\subjclass[2010]{53C44, 53C21}

\date{\today}

\begin{document}
\begin{abstract}
We exhibit the first examples of closed $4$-manifolds with nonnegative sectional curvature that lose this property when evolved via Ricci flow.
\end{abstract}

\maketitle

\section{Introduction}
Several great successes in Geometric Analysis continue to be achieved through \emph{Ricci flow}, a technique introduced by Hamilton~\cite{hamilton-original} around $35$ years ago. This is a way of evolving Riemannian metrics $\g$ on a manifold $M$ via the geometric PDE 
\begin{equation}\label{eq:RF}
\frac{\partial \g}{\partial t} = -2 \Ric_{\g}
\end{equation}
where $\Ric_\g$ is the Ricci tensor of $\g$. The underlying theme in applications of this technique is that the Ricci flow of Riemannian metrics, similarly to the heat flow of temperature distributions and other diffusion processes, should have regularizing properties that eventually evolve a metric to some \emph{canonical} or \emph{best} metric on $M$, whose existence allows to draw topological conclusions about $M$.

A fundamental step to carry out geometric applications is understanding the behavior of curvature conditions along the flow.
In his seminal paper~\cite{hamilton-original}, Hamilton proved that \eqref{eq:RF} preserves nonnegative Ricci curvature $(\Ric\geq 0)$ and nonnegative sectional curvature $(\sec\geq0)$ on closed $3$-manifolds, as well as nonnegative scalar curvature in closed manifolds of all dimensions.
Hamilton also proved that positive-semidefiniteness of the curvature operator is preserved in closed manifolds of all dimensions~\cite{hamilton-4d}, and nonnegative isotropic curvature is preserved on closed $4$-manifolds~\cite{hamilton-4d-pic}. Independently, Brendle and Schoen~\cite{bs} and Nguyen~\cite{nguyen} generalized the latter to closed manifolds of all dimensions. An elegant and unified approach to proving invariance of all the above curvature nonnegativity conditions under the Ricci flow was developed by Wilking~\cite{wilking-lie}.

On the opposite side, several curvature conditions have been shown not to be preserved by \eqref{eq:RF}. For instance, M\'aximo~\cite{maximo-rf,maximo2} constructed K\"ahler metrics on $4$-manifolds with $\Ric\geq0$ or $\Ric>0$ (but without $\sec\geq0$) that evolve to metrics with mixed Ricci curvature.
B\"{o}hm and Wilking~\cite{bw-pi1-ricci} exhibited homogeneous metrics with $\sec>0$ that develop mixed Ricci curvature in dimension $12$, and mixed sectional curvature in dimension $6$. The latter behavior was shown to be generic among homogeneous metrics on these manifolds by Abiev and Nikonorov~\cite{abiev-nikonorov}. Finally, \emph{noncompact} examples of complete manifolds with $\sec\geq0$ that develop mixed sectional curvature in all dimensions $\geq4$ were found by Ni~\cite{ni}. Nevertheless, the existence of closed manifolds exhibiting such behavior in dimensions $4$ and $5$ remained unsettled.
The main result of this paper is that many such examples exist:

\begin{mainthm}
There exist metrics with $\sec\geq0$ on $S^4$, $\C P^2$, $S^2\times S^2$, and $\C P^2 \# \overline{\C P^2}$ that immediately lose this property when evolved via Ricci~flow.
\end{mainthm}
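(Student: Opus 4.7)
The plan is to work in the setting of cohomogeneity one metrics, where the Ricci flow equation reduces to a parabolic system in one space variable and time, permitting explicit computations of the evolution of sectional curvature. Each of the four manifolds $S^4$, $\C P^2$, $S^2\times S^2$, and $\C P^2 \# \overline{\C P^2}$ admits a cohomogeneity one action of a compact Lie group (for instance, the linear $\SO(3)$-actions for the first three, and a $T^2$- or a $\SO(3)$-action for the last). Under such an action, a general invariant metric takes the form $\g = \dd t^2 + \g_t$, where $\g_t$ is a smooth family of invariant metrics on the principal orbit, parametrized by $t$ in an interval $[0,L]$ and satisfying appropriate smoothness conditions at the two singular orbits $t=0$ and $t=L$.

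Using the standard formulas for the sectional curvatures of such metrics in terms of the component functions of $\g_t$, I would identify the cone $\mathcal C$ of cohomogeneity one metrics with $\sec\geq 0$, together with its boundary $\partial\mathcal C$, on which some distinguished (invariant) sectional curvature vanishes. The first step of the proof is then to exhibit, on each of the four manifolds, explicit metrics $\g_0\in\partial\mathcal C$, i.e., with $\sec\geq 0$ and $K(\pi_0)=0$ on some $2$-plane $\pi_0$ at some orbit.

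Next, uniqueness of the Ricci flow forces the cohomogeneity one structure to be preserved along the flow, so that \eqref{eq:RF} reduces to a parabolic system for the component functions $f_i(t,\tau)$ of $\g_t$. Differentiating $K(\pi_0)$ in the flow time $\tau$ at $\tau=0$ via Hamilton's evolution equation for the curvature operator, one obtains a sum of a spatial Laplacian term, which is nonnegative at a plane where $K$ attains its minimum, and a Hamilton-quadratic term $Q(R)$ in the curvature operator. The crucial step is to choose $\g_0\in\partial\mathcal C$ so that the contribution of $Q(R)$ at $\pi_0$ is strictly negative, forcing $\sec$ to immediately become negative by continuity.

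The main obstacle lies in this final computation: one needs to identify, for each of the four manifolds, initial cohomogeneity one metrics on $\partial\mathcal C$ at which the Hamilton quadratic points strictly outward from $\mathcal C$, while all other sectional curvatures remain nonnegative at $\tau=0$. This forces one to fully exploit the additional freedom in cohomogeneity one metrics beyond doubly warped products, and to carry out a careful case-by-case analysis exploiting the isotropy representations at singular orbits in each of the four cases. A subsidiary difficulty is guaranteeing that the examples satisfy the correct smoothness conditions at the singular orbits, and that the plane $\pi_0$ on which $\sec$ goes negative lies at a principal-type point, rather than at a singular orbit where the computation of $Q(R)$ would degenerate.
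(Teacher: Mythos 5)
Your high-level strategy --- reduce to a cohomogeneity one Ansatz, appeal to uniqueness of the flow to preserve the symmetry, and track the evolution of a zero-curvature plane --- agrees with the paper's. However, there is a genuine gap in your central analytic step. You decompose $\partial_\tau K(\pi_0)\big|_{\tau=0}$ into a Laplacian term, which you correctly note is nonnegative at a plane where $K$ attains its minimum, plus a reaction term $Q(R)$, and propose to choose $\g_0$ so that $Q(R)(\pi_0)<0$. But a nonnegative Laplacian term plus a strictly negative reaction term has indeterminate sign: for a generic metric on $\partial\mathcal{C}$ with an \emph{isolated} flat plane, the Laplacian is in fact strictly positive, and nothing in your plan guarantees that $Q(R)$ dominates; ``by continuity'' does not follow. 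To close the argument you need initial metrics in which the flat plane is not isolated, so that the sectional curvature of that plane vanishes on an open set and the Laplacian contribution is exactly zero. This is precisely the role of the Grove--Ziller metrics, which you do not identify: near each singular orbit, the two noncollapsing warping functions among $\varphi,\psi,\xi$ are \emph{equal and constant}, the collar is locally a Riemannian product, and the plane $\sigma=\operatorname{span}\{\partial_r,X_1\}$ is flat for all $r$ in that collar (indeed tangent to a totally geodesic flat strip). Without this flat-strip structure, the inference from ``$Q(R)<0$'' to ``$\sec$ becomes negative'' is unjustified.

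Two further remarks. First, the paper's computation bypasses Hamilton's curvature evolution entirely: Proposition~\ref{prop:RFequations} rewrites the flow as a PDE system in $(\zeta,\varphi,\psi,\xi)$, and the proof differentiates the explicit formula $\sec(\sigma)=-\varphi_{rr}/(\varphi\zeta^2)+\varphi_r\zeta_r/(\varphi\zeta^3)$ directly in $t$. Because $\varphi$ is constant in $r$ on the collar and $\zeta\equiv1$ at $t=0$, this collapses to $\frac{\dd}{\dd t}\sec(\sigma)\big|_{t=0}=-\varphi_{rrt}/\varphi$, and the strict negativity comes from the smoothness condition $\xi_r(0,0)=\zeta(0,0)\neq 0$ at the singular orbit, which forces $\varphi_{rrt}=4(\xi_r^2+\xi_{rr}\xi)/\varphi^3>0$ for $r_0$ small. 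So the source of negativity is the boundary smoothness condition, not a delicate balance in $Q(R)$. Second, a minor factual slip: for $S^2\times S^2$ and $\C P^2\#\overline{\C P^2}$, the cohomogeneity one group used is $\Sp(1)$ acting with three-dimensional principal orbits; a $T^2$-action cannot be cohomogeneity one on a $4$-manifold.
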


By taking products of the above manifolds with spheres, one easily concludes:

\begin{maincor}
Ricci flow does not preserve $\sec\geq0$ on closed manifolds of any dimension~$\geq4$.
\end{maincor}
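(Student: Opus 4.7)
The plan is to reduce to the four-dimensional case provided by the Main Theorem. For $n=4$, the Main Theorem itself already yields the claimed examples, so assume $n\geq 5$. Take any of the four closed $4$-manifolds $(M^4, g_0)$ furnished by the Main Theorem, let $h_0$ be a round metric on $S^{n-4}$ when $n\geq 6$ and the standard metric on $S^1$ when $n=5$, and consider the closed manifold $N^n := M^4 \times S^{n-4}$ equipped with the product metric $g_0\oplus h_0$. Since both factors have $\sec\geq 0$ and every mixed $2$-plane in a Riemannian product has vanishing sectional curvature, the product metric $g_0\oplus h_0$ has $\sec\geq 0$.

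The second step is to identify the Ricci flow starting at $g_0\oplus h_0$. Because the Ricci tensor of a Riemannian product splits as the direct sum of the Ricci tensors of the factors, the family $g(t)\oplus h(t)$ satisfies the Ricci flow equation on $N^n$, where $g(t)$ is the Ricci flow of $g_0$ on $M^4$ and $h(t)$ is the Ricci flow of $h_0$ on $S^{n-4}$ (a homothetic contraction when $n\geq 6$, and simply $h_0$ itself when $n=5$). By uniqueness of Ricci flow on a closed manifold, this is the flow starting at $g_0\oplus h_0$.

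Finally, fix any arbitrarily small $t>0$. By the Main Theorem, there exist a point $p\in M^4$ and a $2$-plane $\sigma\subset T_p M^4$ whose sectional curvature with respect to $g(t)$ is negative. For any $q\in S^{n-4}$, regarding $\sigma$ as a $2$-plane in $T_{(p,q)}N^n$ lying entirely in the $M^4$-factor, its sectional curvature with respect to $g(t)\oplus h(t)$ coincides with its sectional curvature with respect to $g(t)$, hence is still negative. Therefore $g(t)\oplus h(t)$ fails to have $\sec\geq 0$ for all small $t>0$, proving the Corollary. The argument is essentially bookkeeping once the Main Theorem is granted; the only point requiring care is the product structure of the Ricci flow, which is immediate from uniqueness together with the splitting of the Ricci tensor.
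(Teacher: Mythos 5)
Your proposal is correct and follows exactly the route the paper indicates: take the product of one of the four-dimensional examples with a sphere (including $S^1$ when $n=5$), observe that the product metric has $\sec\geq 0$, and use uniqueness together with the splitting of the Ricci tensor to conclude that the Ricci flow preserves the product structure, so the negatively curved $2$-plane from the Theorem persists in the $M^4$-factor. The paper leaves this as a one-line remark; you have simply filled in the (correct) details.
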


We remark that the $4$-manifolds listed in the Theorem, together with $\C P^2 \# \C P^2$, are the only closed simply-connected $4$-manifolds currently known to admit metrics with $\sec\geq0$. Conjecturally, this list is complete.
Furthermore, the manifolds in the Theorem are the only closed simply-connected $4$-manifolds that carry a \emph{cohomogeneity one} action, i.e., an isometric action whose orbit space is $1$-dimensional. The metrics with $\sec\geq0$ used as initial data were introduced by Grove and Ziller~\cite{grove-ziller-annals} and are invariant under these large isometry groups, hence so are their Ricci flow evolution.
Exploiting the fact that isometries are preserved, one may translate the Ricci flow equation \eqref{eq:RF} into a more accessible system of coupled PDEs in only $2$ variables (one for time and one for space), see Proposition~\ref{prop:RFequations}. This allows us to explicitly compute the first variation of the sectional curvature of certain initially flat planes and determine it is negative, hence the manifold immediately acquires some negatively curved planes under the flow.
Similar cohomogeneity one frameworks were previously employed by B\"ohm~\cite{bohm-inventiones} and Dancer and Wang~\cite{dancer-wang} to construct Einstein metrics and Ricci solitons, by Pulemotov~\cite{pulemotov} to study Ricci flow on manifolds with boundary, and implicitly in several other recent works including \cite{ak,warpedBerger,ls}.

It is our hope that this unifying viewpoint of \emph{cohomogeneity one Ricci flow} will be more systematically studied in the future, mirroring the ongoing study of \emph{homogeneous Ricci flow} pioneered by Lauret~\cite{lauret1}, B\"ohm~\cite{bohm}, B\"ohm and Lafuente~\cite{bl}, and others, in which the Ricci flow equations \eqref{eq:RF} reduce to an ODE. In some sense, this is the next step in a symmetry program approach to understanding Ricci flow.

There are several general issues to be addressed, e.g., determining under which conditions the more restrictive \emph{diagonal} cohomogeneity one Ansatz is preserved (see Proposition~\ref{prop:diagonalpreserved} for the particular case at hand in this paper, and Remark~\ref{rem:diagonalpreservation}). Similar issues were confronted by Lauret and Will~\cite{lauret-will2} in the case of Ricci flow on Lie groups, and by Dammerman~\cite{diagonalize-cohom1} for cohomogeneity one Einstein manifolds. Other natural future directions include investigating the long-term behavior of cohomogeneity one Ricci flow and the types of singularities that it may develop.

This paper is organized as follows. A recollection of facts about cohomogeneity one manifolds is given in Section~\ref{sec:cohom1}. Section~\ref{sec:examples} has a detailed account of the $4$-dimensional examples. The behavior of these manifolds and their sectional curvature under Ricci flow is addressed in Section~\ref{sec:flow}, where the Theorem is proved.

\subsection*{Acknowledgements} We thank Wolfgang Ziller for many helpful discussions, and Dan Knopf and Ricardo Mendes for comments and suggestions.

\section{Cohomogeneity one manifolds}\label{sec:cohom1}

In this section, we briefly review basic aspects of cohomogeneity one manifolds. The simply-connected $4$-dimensional examples and relevant nonnegatively curved metrics are described in the next section. For more details, we refer to~\cite{mybook,grove-ziller-annals,gz-inventiones,ziller-coh1survey}.

\subsection{Cohomogeneity one structure}
A group $\G$ acting isometrically on a Riemannian manifold $(M,\g)$ is said to act with \emph{cohomogeneity one} if the orbit space $M/\G$ is one-dimensional. If $M$ is compact, then $M/\G$ with the induced orbital distance is either isometric to a circle $S^1$ or to a closed interval $[0,L]$. We shall focus on the latter case, in which $M$ is topologically more interesting.
For each $r\in M/\G$, $0<r<L$, the preimage $\pi^{-1}(r)\subset M$ is a \emph{principal orbit}, that is, a (codimension $1$) hypersurface in $M$. The preimages of the endpoints, $B_-=\pi^{-1}(0)$ and $B_+=\pi^{-1}(L)$, are nonprincipal orbits, which are called \emph{singular} (if the codimension is $\geq2$), or \emph{exceptional} otherwise. Nonprincipal orbits on simply-connected cohomogeneity one manifolds are always singular.

Pick a point $x_{-}\in B_{-}$ and consider a minimal geodesic $\gamma(r)$ in $M$ joining $x_{-}$ to $B_{+}$, meeting it at $x_{+}=\gamma(L)$; that is, $\gamma$ is a horizontal lift of the interval $[0,L]$ to $M$. Denote by $\K_{\pm}$ the isotropy group at $x_{\pm}$, and by $\H$ the isotropy at an interior point $\gamma(r)$. The principal isotropy $\H$ remains the same group at all $\gamma(r)$, for $r\in (0,L)$, and is a subgroup of $\K_{\pm}$. This gives a decomposition of $M$ as the union of orbits $\G(\gamma(r))$, $0\leq r\leq L$, each of which is a homogeneous space; $\G/\H$ at interior points $r\in (0,L)$, and $\G/\K_{\pm}$ at the endpoints $r=0$ and $r=L$.

By the Slice Theorem, the tubular neighborhoods $D(B_{-}) = \pi^{-1}\big(\big[0,\frac{L}{2}\big]\big)$ and $D(B_{+}) = \pi^{-1}\big(\big[\frac{L}{2},L\big]\big)$ of the singular orbits are disk bundles over $B_-$ and $B_+$. Let $D^{l_{\pm}+1}$ be the normal disks to $B_{\pm}$ at $x_{\pm}$, so that $l_\pm+1$ is the codimension of $B_\pm$. Then $\K_{\pm}$ acts transitively on the boundary $\partial D^{l_{\pm}+1}$, with isotropy $\H$, so $\partial D^{l_{\pm}+1} = S^{l_{\pm}} = \K_{\pm}/\H$, and the $\K_{\pm}$-action on $\partial D^{l_{\pm}+1}$ extends to a $\K_\pm$-action on all of $D^{l_{\pm}+1}$. Moreover, there are equivariant diffeomorphisms of the disk bundles:
\begin{equation*}
 D(B_{\pm}) \cong\G\times_{\K_{\pm}}D^{l_{\pm}+1}.
\end{equation*}
The manifold $M$ is the union of the above disk bundles, glued along their common boundary $\G/\H$. One associates to such a manifold $M$ the \emph{group diagram} $\H\subset\{\K_-,\K_+\}\subset \G$. Conversely, given a group diagram $\H\subset \{\K_{-}, \K_{+}\}\subset \G$, where $\K_{\pm}/\H$ are spheres, there exists a cohomogeneity one manifold $M$ given as the union of the above disk bundles.

The full isometry group of a cohomogeneity one manifold $(M,\g)$ is often strictly larger than the group $\G$ that acts with cohomogeneity one, and we make frequent use of this fact in what follows.

\subsection{Cohomogeneity one metrics}
Since $\G$ acts on $(M,\g)$ by isometries, the metric $\g$ is completely determined by its restriction to the geodesic $\gamma(r)$, which meets all orbits orthogonally. Furthermore, it suffices to determine $\g$ on the open and dense subset $M\setminus B_\pm$, corresponding to $\gamma(r)$, $r\neq 0,L$. On this subset, we write:
\begin{equation}\label{eq:cohom1metric}
\g = \dd r^2 + \g_r, \qquad 0<r<L,
\end{equation}
where $\g_r$ is a $1$-parameter family of homogeneous metrics on $\G/\H$. Conversely, in order to define a cohomogeneity one metric $\g$ by means of the above equation, certain \emph{smoothness conditions} must be fulfilled at the endpoints $r=0$ and $r=L$.

We henceforth only consider cohomogeneity one metrics that are \emph{diagonal}, in a sense slightly stronger than in \cite{gz-inventiones}. More precisely, let $\{v_i\}$ be a basis of the Lie algebra of $\G$ which is adapted to the inclusions $\H\subset\{\K_-,\K_+\}\subset\G$ and orthonormal with respect to a fixed bi-invariant metric. Consider the induced action fields $X_i(r)=\frac{\dd}{\dd s}\exp(s\, v_i)\cdot\gamma(r)\big|_{s=0}$. A metric \eqref{eq:cohom1metric} is \emph{diagonal} if it satisfies
\begin{equation*}
\g_r\big(X_i(r),X_j(r)\big)=f_i(r)^2\delta_{ij},
\end{equation*}
that is, $\g_r$ is the diagonal matrix $\diag(f_1^2,\cdots,f_k^2)$ in the basis $\{X_i\}$, where $k=\dim M-1$.
Note that $f_i(r)$ is hence the length of the Killing field $X_i(r)$, and this Killing field vanishes at $r=0$ or $r=L$ if and only if $v_i$ belongs to the Lie algebra of the corresponding isotropy subgroup $\K_\pm$.
In this situation, the smoothness conditions translate into conditions on the Taylor series of $f_i(r)$ at $r=0$ and $r=L$.
Details on how to compute such smoothness conditions in terms of the algebraic data in the cohomogeneity one group diagram can be found in the forthcoming paper~\cite{verdiani-ziller-new}, see also \cite[Appendix]{p2} and \cite[Sec.\ 2]{gz-inventiones}.

\begin{remark}
Not all cohomogeneity one manifolds admit diagonal metrics in the above sense. A sufficient condition for the existence of such metrics is for the isotropy representation of $\H$ to split as a sum of pairwise inequivalent representations.
\end{remark}

\section{\texorpdfstring{On the $4$-dimensional examples}{On the 4-dimensional examples}}\label{sec:examples}
The only closed simply-con\-nected $4$-manifolds that admit cohomogeneity one structures are $S^4$,  $\C P^2$, $S^2\times S^2$, and $\C P^2 \# \overline{\C P^2}$, see \cite{parker-classfn}. We now list their group diagrams, corresponding to the cohomogeneity one actions that we use to describe metrics on these manifolds:

\begin{table}[ht]
 \begin{center}
  \begin{tabular}{ll}
    \hline\noalign{\smallskip}
      $M$ & $\H\subset \{\K_{-}, \K_{+}\}\subset \G$\\ 
       \hline\noalign{\medskip}
      $S^4$ & $\S(\O(1)\O(1)\O(1))\subset \{\S(\O(2)\O(1)), \S(\O(1)\O(2))\}\subset \SO(3)$\\ 
       \noalign{\smallskip}
      $\C P^2$ & $\zed_2 = \left\langle \diag(-1,-1,1)\right\rangle \subset \{\S(\O(1)\O(2)), \SO(2)_{1,2}\}\subset \SO(3)$ 
       \\ \noalign{\smallskip}
      $S^2\times S^2$ & $\zed_n=\left\langle e^{2\pi i/n}\right\rangle\subset\big\{ \{e^{i\theta}\}, \{e^{i\theta}\}\big\}\subset \Sp(1)$, \quad $n$ even
     \\  \noalign{\smallskip}
      $\C P^2 \# \overline{\C P^2}$ & $\zed_n=\left\langle e^{2\pi i/n}\right\rangle\subset\big\{ \{e^{i\theta}\}, \{e^{i\theta}\}\big\}\subset \Sp(1)$, \quad $n$ odd
      \\ \noalign{\smallskip}\hline
  \end{tabular}
 \end{center}
 \caption{Group diagrams for cohomogeneity one $4$-manifolds}
 \vspace{-0.5cm}
\end{table}

In the above, $\SO(2)_{1,2}$ is the upper block diagonal embedding of $\SO(2)$ in $\SO(3)$, and $\Sp(1)\cong S^3\subset\Hr$ is identified with the group of unit quaternions.

Note that the only groups $\G$ above are $\SO(3)$ and $\Sp(1)$, which have the same Lie algebra $\mathfrak{g}\cong\mathfrak{su}(2)$.
Moreover, the groups $\K_{\pm}$ consist of finitely many copies of $\SO(2)\cong S^1$, and the principal isotropy group $\H$ is finite, so its Lie algebra is trivial. In particular, on the regular part $M\setminus B_\pm$, there are $3$ linearly independent Killing vector fields $X_1$, $X_2$, and $X_3$, which are action fields corresponding to a basis of~$\mathfrak g$.
More precisely, $X_i(p)=\frac{\dd}{\dd s}\exp(s\, v_i)\cdot p\big|_{s=0}$, where $\{v_i\}$ is the basis $\{I,J,K\}$ in the case of $\Sp(1)$, and $\{E_{23},E_{31},E_{12}\}$ in the case of $\SO(3)$, where $E_{jk}$ is the skew-symmetric matrix with a $+1$ in the $(j,k)$ entry, a $-1$ in the $(k,j)$ entry, and zeros elsewhere.
Thus, fixing a horizontal geodesic $\gamma(r)$, a diagonal metric \eqref{eq:cohom1metric} on $M$ is of the form
\begin{equation}\label{eq:diagonalmetricstationary}
\g = \dd r^2 + \varphi(r)^2 \dd x_1^2 + \psi(r)^2 \dd x_2^2 + \xi(r)^2 \dd x_3^2, \qquad 0<r<L,
\end{equation}
where $\dd x_i$ is the $1$-form dual to $X_i$. The singular orbits $B_\pm=\G/\K_\pm$ in all above examples are $2$-dimensional, which means that only one of the functions $\varphi$, $\psi$, and $\xi$, vanishes at each of the endpoints $r=0$ and $r=L$. Since the codimension of $B_\pm$ is equal to $2$, by the work of Grove and Ziller~\cite{grove-ziller-annals}, these manifolds support $\G$-invariant diagonal metrics $\ggz$ with $\sec\geq0$. These are the nonnegatively curved metrics used to prove the Theorem.

We now discuss some details about these metrics, following \cite[Sec.\ 2]{ziller-coh1survey} and \cite{mybook}. Some features common to all of them (originating from the gluing in the Grove-Ziller construction), are the presence of flat planes at all points, including planes along $\gamma(r)$ that contain the tangent direction $\gamma'(r)$, see also Remark~\ref{rem:final}. Moreover, the functions among $\varphi$, $\psi$, and $\xi$ that do not vanish at the endpoint corresponding to a singular orbit $B_\pm$ are \emph{equal and constant} in a neighborhood of that endpoint, while the remaining functions vanish with \emph{nonvanishing first derivative}. 
Finally, there are \emph{sufficiently many isometries} to ensure that the Ansatz \eqref{eq:diagonalmetricstationary} is preserved along the Ricci flow. These features are key in the proof of the Theorem.

\subsection{\texorpdfstring{$S^4$}{Sphere}}\label{subsec:s4}
The $\SO(3)$-action on $S^4$ is the restriction to the unit sphere of the action by conjugation on the space $V$ of symmetric traceless $3\times 3$ real matrices. The singular orbits $B_\pm$ are Veronese embeddings of $\R P^2$ formed by matrices with $2$ equal eigenvalues of the same sign; while principal orbits are diffeomorphic to the real flag manifold $W^3=S^3/(\Z_2\oplus\Z_2)$ and formed by generic matrices in $V$. The horizontal geodesic joining $x_-=\tfrac{1}{\sqrt6}\diag(1,1,-2)\in B_-$ to $x_+=\tfrac{1}{\sqrt6}\diag(2,-1,-1)\in B_+$~is
\begin{equation*}
\gamma(r)=\diag\left(\tfrac{\cos r}{\sqrt6}+\tfrac{\sin r}{\sqrt2},\, \tfrac{\cos r}{\sqrt6}-\tfrac{\sin r}{\sqrt2},\, -\tfrac{2\cos r}{\sqrt6}\right)\in V, \qquad 0<r<\tfrac\pi3.
\end{equation*}
In this description, the round metric on $S^4$ takes the form \eqref{eq:diagonalmetricstationary} where
\begin{equation}\label{eq:roundsphere}
\varphi(r)=2\sin r, \;\; \psi(r)=\sqrt3\cos r+\sin r, \;\; \xi(r)=\sqrt3\cos r-\sin r.
\end{equation}
The metric $\ggz$ is also of the form \eqref{eq:diagonalmetricstationary}, with functions $\varphi$, $\psi$, and $\xi$ that satisfy the same smoothness conditions as the above at $r=0$ and $r=\tfrac\pi3$. However, they are constant away from a neighborhood of the vanishing locus (see Figure~\ref{fig:graphs}).
\begin{figure}[ht]
\begin{center}
\includegraphics[scale=0.48]{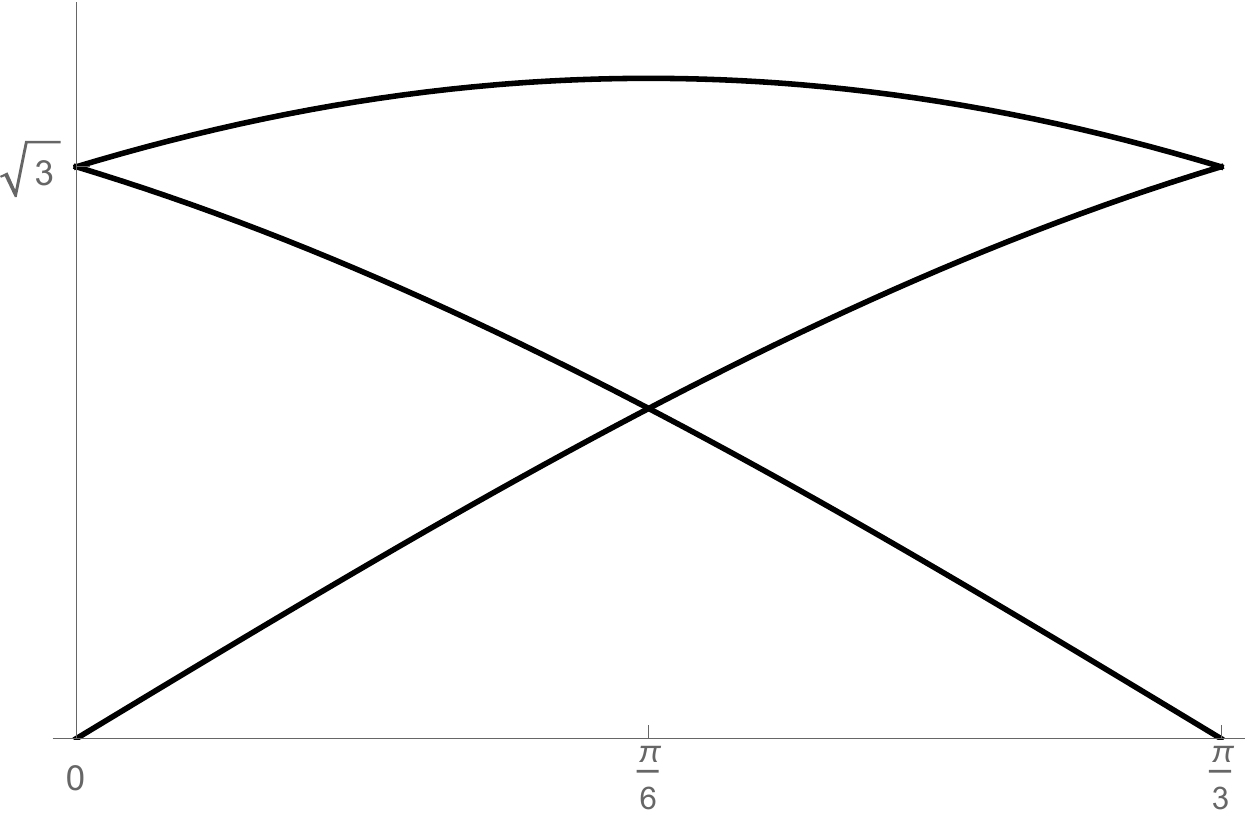}
\includegraphics[scale=0.48]{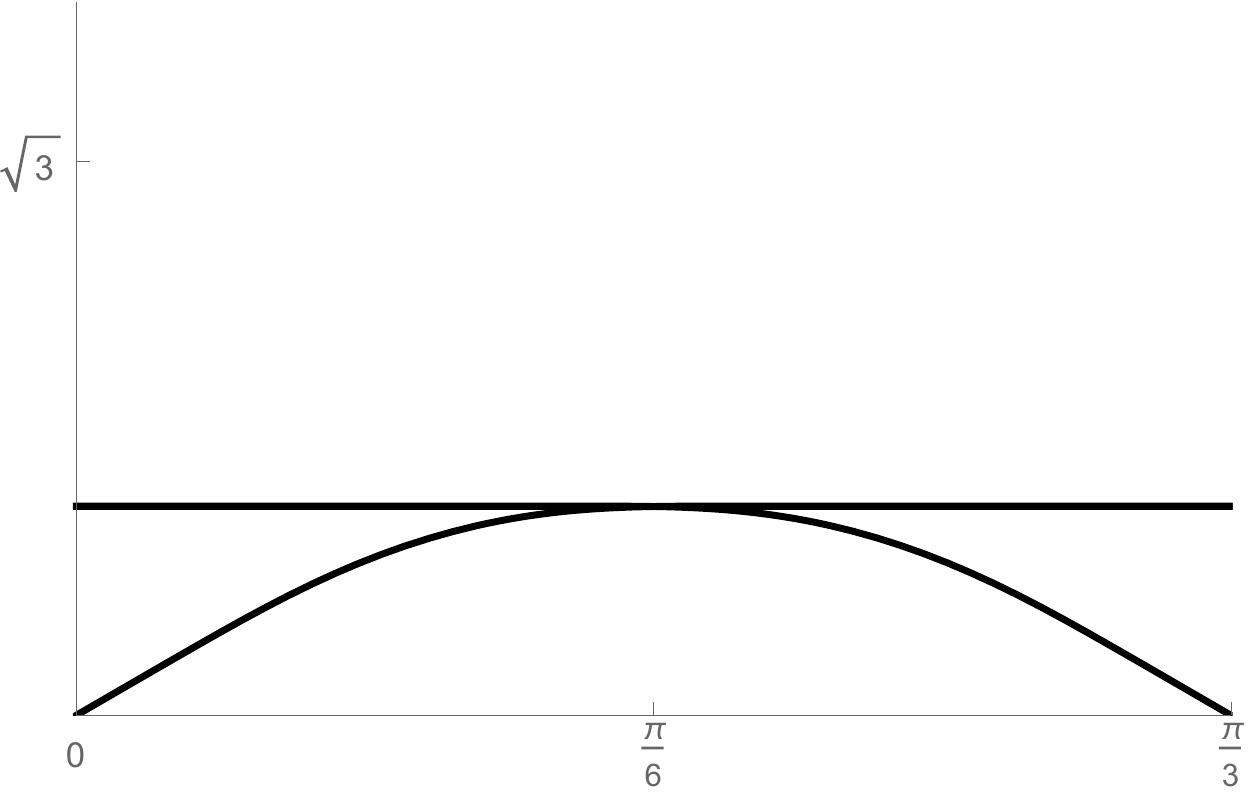}
\end{center}
\caption{The functions $\varphi$, $\psi$, and $\xi$ corresponding to the round metric (left) and to the Grove-Ziller metric $\ggz$ (right).}\label{fig:graphs}
\end{figure}

More generally, given any $\SO(3)$-invariant metric $\g$ on $S^4$, there are isometries given by $h_i\in \H$,
\begin{equation*}
\begin{aligned}
h_1&=\diag(1, -1, -1),\\
h_2&=\diag(-1, 1, -1), \\
h_3&=\diag(-1, -1, 1), 
\end{aligned}
\end{equation*}
that fix each point $\gamma(r)$ and such that $\dd h_i(\gamma(r))\colon T_{\gamma(r)}S^4\to T_{\gamma(r)}S^4$ are respectively
\begin{equation}\label{eq:dhs}
\begin{aligned}
\dd h_1(\gamma(r))&=\diag(1,1, -1, -1),\\
\dd h_2(\gamma(r))&=\diag(1,-1, 1, -1),\\
\dd h_3(\gamma(r))&=\diag(1,-1, -1, 1),
\end{aligned}
\end{equation}
with respect to the frame $\left\{\frac{\partial}{\partial r},X_1,X_2,X_3\right\}$ at $\gamma(r)$. Thus, $\g$ must be of the form \eqref{eq:diagonalmetricstationary}, i.e., $\left\{\frac{\partial}{\partial r},X_1,X_2,X_3\right\}$ is a \emph{$\g$-orthogonal frame} along $\gamma(r)$. Indeed, for $i\neq j$,
\begin{equation}\label{eq:isometry-trick}
\begin{aligned}
&\g(X_i,X_j)=\g\big(\dd h_i (X_i), \dd h_i (X_j)\big)=-\g(X_i,X_j)\\
&\g\big(\tfrac{\partial}{\partial r},X_j\big)=\g\big(\dd h_i\big(\tfrac{\partial}{\partial r}\big),\dd h_i \big(X_j\big)\big)=-\g\big(\tfrac{\partial}{\partial r},X_j\big).
\end{aligned}
\end{equation}

\begin{remark}
An alternative way of verifying the above claim is that $\gamma$ is a component of the fixed point set of the discrete group $\N(\H)/\H$, which consists of totally geodesic submanifolds. Thus, $\gamma$ is a horizontal geodesic (up to reparametrization) with respect to any metric $\g$ invariant under these isometries. Moreover, the vertical part $\g_r$ of such a metric $\g$ must be diagonal with respect to the above frame as it corresponds to an $\Ad(\H)$-invariant tensor on $\mathfrak{su}(2)$, which decomposes as the direct sum of $3$ inequivalent $1$-dimensional representations spanned by the $X_i$.
\end{remark}

\subsection{\texorpdfstring{$\C P^2$}{Complex projective plane}}\label{subsec:cp2}
The $\SO(3)$-action on $\C P^2$ is obtained as the subaction of the transitive $\SU(3)$-action. The singular orbit $B_-$ is the totally real $\R P^2\subset\C P^2$, and $B_+\cong S^2$ is the quadric $\big\{[z_0:z_1:z_2]\in\C P^2:\sum_j z_j^2=0\big\}$. The horizontal geodesic joining $x_-=[1:0:0]\in B_-$ to $x_+=\left[\tfrac{1}{\sqrt2}:\tfrac{i}{\sqrt{2}}:0\right]\in B_+$ is
\begin{equation*}
\gamma(r)=[\cos r:i\,\sin r:0], \qquad 0<r<\tfrac\pi4.
\end{equation*}
In this description, the Fubini-Study metric on $\C P^2$ takes the form \eqref{eq:diagonalmetricstationary} where
\begin{equation}\label{eq:fubinistudy}
\varphi(r)=\sin r, \;\; \psi(r)=\cos 2r, \;\; \xi(r)=\cos r,
\end{equation}
while $\ggz$ corresponds to functions $\varphi$, $\psi$, and $\xi$ that are qualitatively similar to those in the previous example.

Consider the complex conjugation map
\begin{equation}\label{eq:c}
c\colon\C P^2\to\C P^2, \qquad c\big([z_0:z_1:z_2]\big)=[\overline{z_0}:\overline{z_1}:\overline{z_2}],
\end{equation}
which clearly commutes with the $\SO(3)$-action and is an involution with fixed point set $B_-$. It is easy to show that $\phi=g\circ c$, where $g=\diag(1,-1,-1)\in\SO(3)$, is a diffeomorphism that fixes the above geodesic $\gamma(r)$ pointwise and whose linearization at any such point is the linear transformation on $T_{\gamma(r)}\C P^2$ with matrix $\phi_*=\diag(1,1, -1, -1)$ with respect to the frame $\left\{\tfrac{\partial}{\partial r},X_1,X_2,X_3\right\}$. In particular, this linear transformation is orthogonal with respect to any metric of the form \eqref{eq:diagonalmetricstationary}, including $\ggz$. It thus follows that $c$, and hence $\phi=g\circ c$, are isometries of $(\C P^2,\ggz)$. Indeed, given any $p\in\C P^2$, there exists $g_p\in\SO(3)$ such that $g_p\cdot p$ lies in $\gamma$, and hence one may write $c(p)=(gg_p)^{-1}gg_p\cdot c(p)=(gg_p)^{-1}g\cdot c(g_p\cdot p)$ as a composition of diffeomorphisms whose linearization is isometric.

We claim that if $\g$ is any $\SO(3)$-invariant Riemannian metric on $\C P^2$ such that $\phi$ is an isometry, then $\left\{\tfrac{\partial}{\partial r},X_1,X_2,X_3\right\}$ is $\g$-orthogonal and hence $\g$ must also be of the form \eqref{eq:diagonalmetricstationary}. Indeed, using $\phi$ in conjunction with $\diag(-1,-1,1)\in\H$, one can produce sufficiently many isometries of $(\C P^2,\g)$ that fix each point $\gamma(r)$ and act on $T_{\gamma(r)}\C P^2$ just as \eqref{eq:dhs}, so that an argument analogous to \eqref{eq:isometry-trick} may be carried~out.

\subsection{\texorpdfstring{$S^2\times S^2$ and $\C P^2\#\overline{\C P^2}$}{Sphere bundles over the sphere}}\label{subsec:mn}
The $\Sp(1)$-actions on $S^2\times S^2$ and $\C P^2\#\overline{\C P^2}$ are induced by quaternionic left-multiplication on the first factor of $S^3\times S^2\subset\Hr\oplus \C\oplus\R$ after taking the quotient by the diagonal circle action $e^{i\theta}\cdot (q,z,x)=\left(q\,e^{i\theta},z\,e^{in\theta},x\right)$. The orbit space $M_n=(S^3\times S^2)/S^1$ of this circle action is diffeomorphic to $S^2\times S^2$ if $n$ is even, and to $\C P^2\#\overline{\C P^2}$ if $n$ is odd. The singular orbits $B_\pm$ are both diffeomorphic to $S^2$, and lift to $S^3\times\{\pm N\}\subset S^3\times S^2$ where $N=\left(0,\tfrac12\right)\in S^2\left(\tfrac12\right)\subset\C\oplus\R$ is the North Pole, while principal orbits are diffeomorphic to the Lens space $S^3/\Z_n$. The horizontal geodesic joining $x_-=\left[1,0,-\tfrac12\right]$ to $x_+=\left[1,0,\tfrac12\right]$ is
\begin{equation*}
\gamma(r)=\left[1,\tfrac12\sin 2r,-\tfrac12\cos 2r\right]\in M_n, \qquad 0<r<\tfrac{\pi}{2},
\end{equation*}
where brackets indicate the coordinates induced by $\Hr\oplus\C\oplus\R$ in the quotient space. 
Similarly to the previous examples, in this description, the metric $\ggz$ on $M_n$ is of the form \eqref{eq:diagonalmetricstationary} with $\varphi$, $\psi$, and $\xi$ satisfying analogous properties.

Consider the involutions given by conjugation by $j,k\in\Sp(1)$,
\begin{equation}\label{eq:phis}
\phi_j,\phi_k\colon M_n\to M_n, \;\; \phi_j\left(\left[q,z,x\right]\right)=[-j\,q\,j,z,x],\;\,  \phi_k\left(\left[q,z,x\right]\right)=[-k\,q\,k,z,x].
\end{equation}
It is easy to see that the above maps are well-defined diffeomorphisms that leave invariant the $\Sp(1)$-orbits and act on them via conjugation, that is, the restrictions of $\phi_j$ and $\phi_k$ to $\G(\gamma(r))\cong\G/\H=\Sp(1)/\Z_n$ are given by $\phi_j(g\H)=-jgj\H$ and $\phi_k(g\H)=-kgk\H$; recall that $j,k\in\N(\H)$. Furthermore, $\phi_j$ and $\phi_k$ fix the geodesic $\gamma(r)$ pointwise and their linearizations at any such point are the linear transformations on $T_{\gamma(r)}M_n$ with matrices $(\phi_j)_*=\diag(1,-1,1,-1)$ and $(\phi_k)_*=\diag(1,-1,-1,1)$ with respect to the frame $\left\{\tfrac{\partial}{\partial r},X_1,X_2,X_3\right\}$.
In particular, these linear transformations are orthogonal with respect to any metric of the form \eqref{eq:diagonalmetricstationary}, including $\ggz$. It thus follows that $\phi_j$ and $\phi_k$ are isometries of $(M_n,\ggz)$. Indeed, given any $p\in M_n$, there exist $g_p,g'\in\Sp(1)$ such that $g_p\cdot p$ lies in $\gamma$ and $\phi_j(g_p\cdot p)=(g')^{-1}\phi_j(p)$, so one may write $\phi_j(p)=g'\cdot\phi_j(g_p\cdot p)$ as a composition of diffeomorphisms whose linearizations are isometric, and analogously for $\phi_k$.

Similarly to the previous example, we claim that if $\g$ is any $\Sp(1)$-invariant Riemannian metric on $M_n$ such that $\phi_j$ and $\phi_k$ are isometries, then $\g$ must also be of the form \eqref{eq:diagonalmetricstationary}. Indeed, using $\phi_j$ and $\phi_k$, one can produce sufficiently many isometries of $(M_n,\g)$ so that an argument analogous to \eqref{eq:isometry-trick} may be carried~out.

\begin{remark}
When we make reference to \emph{the Grove-Ziller metric} on $S^2\times S^2$ or $\C P^2\#\overline{\C P^2}$, we mean a Grove-Ziller metric $\ggz$ on any of the (infinitely many) cohomogeneity manifolds $M_n$ where $n$ has the appropriate parity.
\end{remark}

\section{Evolution under Ricci flow}\label{sec:flow}

In this section, we analyze the Ricci flow evolution of the cohomogeneity one $4$-manifolds with $\sec\geq0$ discussed above, showing that the diagonal Ansatz \eqref{eq:diagonalmetricstationary} is preserved (Proposition~\ref{prop:diagonalpreserved}), computing explicitly the Ricci flow equations \eqref{eq:RF} for such metrics (Proposition~\ref{prop:RFequations}) and proving the Theorem in the Introduction.

\subsection{Flow behavior}
As a consequence of uniqueness of the solution to the Ricci flow on a closed manifold $(M,\g_0)$, all isometries of $(M,\g_0)$ remain isometries of $(M,\g_t)$ for all $t>0$. It is actually also known that the isometry group of $(M,\g_t)$ remains constant, that is, no other isometries are created in finite time, as a consequence of backwards uniqueness~\cite{kotschwar}. In particular, cohomogeneity one metrics evolve via Ricci flow through other metrics invariant under the same cohomogeneity one action. Nevertheless, the horizontal geodesic $\gamma$ joining the singular orbits, and hence the description \eqref{eq:cohom1metric} of the cohomogeneity one metric, may in general change with time. We now show that this is not the case for the Grove-Ziller metrics in the $4$-dimensional examples discussed above, using their additional isometries.

\begin{proposition}\label{prop:diagonalpreserved}
The Ricci flow evolution $\g(t)$ of the metric $\ggz=\g(0)$ on each of $S^4$, $\C P^2$, $S^2\times S^2$, and $\C P^2\#\overline{\C P^2}$, is through other diagonal metrics
\begin{equation}\label{eq:diagonalmetricmoving}
\g(t) = \zeta(r,t)^2\dd r^2 + \varphi(r,t)^2 \dd x_1^2 + \psi(r,t)^2 \dd x_2^2 + \xi(r,t)^2 \dd x_3^2, \quad 0<r<L,
\end{equation}
along the $\ggz$-geodesic $\gamma(r)$, where $\zeta$, $\varphi$, $\psi$, and $\xi$, are smooth functions of $r$ and $t$.
\end{proposition}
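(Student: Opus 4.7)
The approach is to leverage that Ricci flow preserves isometries, combined with the large symmetry group of $\ggz$ catalogued in Section~\ref{sec:examples}. By Hamilton's uniqueness theorem for Ricci flow on closed manifolds, every isometry of $\g(0)=\ggz$ remains an isometry of $\g(t)$ throughout the existence interval. In particular, $\G$ continues to act by isometries on $(M,\g(t))$, so the cohomogeneity one structure is preserved. Moreover, in each of the four cases, the additional \emph{discrete} isometries identified in Section~\ref{sec:examples} likewise remain isometries of $\g(t)$: the elements $h_1, h_2, h_3$ for $S^4$; the involution $\phi=g\circ c$ together with $\diag(-1,-1,1)\in\H$ for $\C P^2$; and the involutions $\phi_j, \phi_k$ for $M_n$.

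The decisive feature of these discrete isometries is that each fixes every point of the original $\ggz$-horizontal geodesic $\gamma(r)$, and their differentials at $\gamma(r)$, expressed in the frame $\big\{\tfrac{\partial}{\partial r},X_1,X_2,X_3\big\}$, realize the three sign patterns of \eqref{eq:dhs}. A direct check shows that for every pair of distinct slots $\alpha,\beta\in\{r,1,2,3\}$, at least one of these three sign patterns assigns opposite signs to slots $\alpha$ and $\beta$. I would then repeat the calculation \eqref{eq:isometry-trick} verbatim with $\g(t)$ replacing $\g$: if $h$ is the corresponding isometry, then
\[
\g(t)(e_\alpha, e_\beta)\big|_{\gamma(r)} = \g(t)(\dd h\, e_\alpha,\dd h\, e_\beta)\big|_{\gamma(r)} = -\g(t)(e_\alpha, e_\beta)\big|_{\gamma(r)},
\]
which forces the off-diagonal entry to vanish. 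Hence $\big\{\tfrac{\partial}{\partial r},X_1,X_2,X_3\big\}$ is $\g(t)$-orthogonal along $\gamma$, and setting $\zeta(r,t)^2, \varphi(r,t)^2, \psi(r,t)^2, \xi(r,t)^2$ to be the corresponding squared lengths immediately yields the form \eqref{eq:diagonalmetricmoving} along $\gamma$. By $\G$-invariance, this determines $\g(t)$ on all of $M\setminus B_\pm$, and smoothness of $\zeta,\varphi,\psi,\xi$ in $(r,t)$ is inherited from smoothness of the Ricci flow solution.

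The only (modest) obstacle is verifying case by case that the isometries above collectively produce the three sign patterns on the $X_i$-slots, with $+1$ automatic in the $r$-slot since each isometry fixes $\gamma$ as a curve. For $S^4$ this is immediate from \eqref{eq:dhs}; for $\C P^2$ one combines $\phi$, the element $\diag(-1,-1,1)\in\H$, and their composition, exactly as in Section~\ref{subsec:cp2}; for $M_n$ the pair $\phi_j,\phi_k$ and their composition play the analogous role described in Section~\ref{subsec:mn}. Nothing substantial beyond this bookkeeping is required.
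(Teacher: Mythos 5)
Your proposal is correct and follows essentially the same route as the paper: invoke forward uniqueness of Ricci flow on closed manifolds to conclude that the isometry group of $\ggz$ (including the $\G$-action and the additional discrete involutions identified in Section~\ref{sec:examples}) persists, then use the fact that these involutions fix $\gamma$ pointwise and act on $T_{\gamma(r)}M$ by the sign patterns \eqref{eq:dhs} to repeat the cancellation \eqref{eq:isometry-trick} with $\g(t)$ in place of $\g$, forcing the frame $\bigl\{\tfrac{\partial}{\partial r},X_1,X_2,X_3\bigr\}$ to remain $\g(t)$-orthogonal. The paper's proof is terser, simply deferring the case-by-case bookkeeping to Subsections~\ref{subsec:s4}, \ref{subsec:cp2}, and \ref{subsec:mn}, but the content is the same.
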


\begin{proof}
The metric $\ggz$ is a diagonal metric of the form \eqref{eq:diagonalmetricstationary}, and $\gamma(r)$ is a $\ggz$-geodesic parametrized by arclength.
Since isometries are preserved, the Ricci flow evolution of $\ggz$ is through metrics $\g(t)$ which are invariant under the $\G$-action as well as under \eqref{eq:c} on $\C P^2$ and \eqref{eq:phis} on $S^2\times S^2$ and $\C P^2\#\overline{\C P^2}$. As discussed in Subsections \ref{subsec:s4}, \ref{subsec:cp2}, and \ref{subsec:mn}, by means of these isometries, the frame $\left\{\tfrac{\partial}{\partial r},X_1,X_2,X_3\right\}$ along $\gamma(r)$ must be $\g(t)$-orthogonal. In particular, $\g(t)$ are diagonal cohomogeneity one metrics of the form \eqref{eq:diagonalmetricmoving} along $\gamma(r)$, which is $\g(t)$-orthogonal to the $\G$-orbits and hence a horizontal $\g(t)$-geodesic (up to reparametrization).
\end{proof}

\begin{remark}
The Grove-Ziller metric $\ggz$ is smooth but \emph{not real-analytic}, as there are points where all derivatives of $\varphi$, $\psi$, and $\xi$ vanish, but these functions are not globally constant. However, the metrics $\g(t)$, $t>0$, are real-analytic by Bando~\cite{bando}. 
Moreover, since real-analyticity is preserved under Ricci flow, there does not exist a solution to the \emph{backwards Ricci flow} with $\ggz$ as terminal condition.
\end{remark}

\begin{proposition}\label{prop:RFequations}
Let $(M,\g)$ be a $4$-manifold with a cohomogeneity one action of a Lie group $\G$ whose Lie algebra is isomorphic to $\mathfrak{su}(2)$. Assume that $\g$ is a diagonal metric of the form \eqref{eq:diagonalmetricstationary} and that its Ricci flow evolution $\g(t)$ is through other diagonal metrics, as in \eqref{eq:diagonalmetricmoving}. Then the functions $\zeta(r,t)$, $\varphi(r,t)$, $\psi(r,t)$, and $\xi(r,t)$ satisfy the degenerate parabolic system of partial differential equations
\begin{equation}\label{eq:RFcohom1}
\begin{aligned}
\zeta_t &= -\left(\frac{\varphi_r}{\varphi}+\frac{\psi_r}{\psi}+\frac{\xi_r}{\xi}\right)\frac{\zeta_r}{\zeta^2}+\left(\frac{\varphi_{rr}}{\varphi}+\frac{\psi_{rr}}{\psi}+\frac{\xi_{rr}}{\xi}\right)\frac{1}{\zeta}\\[0.1cm]
\varphi_t &=\frac{1}{\zeta^2}\,\varphi_{rr}+\frac{1}{\zeta\psi\xi}\left(\frac{\psi\xi}{\zeta}\right)_{\!r}\varphi_r-\frac{2}{\psi^2\xi^2}\varphi^3+\frac{2(\psi^2-\xi^2)^2}{\psi^2\xi^2}\frac{1}{\varphi}\\[0.1cm]
\psi_t &=\frac{1}{\zeta^2}\,\psi_{rr}+\frac{1}{\zeta\varphi\xi}\left(\frac{\varphi\xi}{\zeta}\right)_{\!r}\psi_r-\frac{2}{\varphi^2\xi^2}\psi^3+\frac{2(\varphi^2-\xi^2)^2}{\varphi^2\xi^2}\frac{1}{\psi}\\[0.1cm]
\xi_t &=\frac{1}{\zeta^2}\,\xi_{rr}+\frac{1}{\zeta\varphi\psi}\left(\frac{\varphi\psi}{\zeta}\right)_{\!r}\xi_r-\frac{2}{\varphi^2\psi^2}\xi^3+\frac{2(\varphi^2-\psi^2)^2}{\varphi^2\psi^2}\frac{1}{\xi}
\end{aligned}
\end{equation}
where subscripts denote derivative with respect to that variable.
\end{proposition}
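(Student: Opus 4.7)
The plan is to compute the Ricci tensor $\Ric_{\g(t)}$ of \eqref{eq:diagonalmetricmoving} in an orthonormal frame adapted to the diagonal Ansatz, then equate $\partial_t \g(t) = -2\,\Ric_{\g(t)}$ componentwise. Fix the $\g(t)$-orthonormal coframe $e^0 = \zeta\,\dd r$, $e^1 = \varphi\,\dd x_1$, $e^2 = \psi\,\dd x_2$, $e^3 = \xi\,\dd x_3$ along $\gamma$, with dual frame $E_0 = \zeta^{-1}\partial_r$ and $E_i = \lambda_i^{-1}X_i$, where $(\lambda_1, \lambda_2, \lambda_3) := (\varphi, \psi, \xi)$. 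Because $\zeta, \varphi, \psi, \xi$ depend only on the orbital parameter $r$, the $\G$-invariance gives $X_i(\lambda_j) = X_i(\zeta) = 0$; because $\partial_r$ is (the extension to a neighborhood of $\gamma$ of) the gradient of the orbital-distance function, $[\partial_r, X_i] = 0$; and in our basis of $\mathfrak g \cong \mathfrak{su}(2)$ one has $[X_i, X_j] = \pm 2\,\epsilon_{ijk}\,X_k$, the sign being immaterial for what follows since curvature is quadratic in the structure constants. These combine to give
\[
[E_0, E_i] = -\frac{(\lambda_i)_r}{\zeta\,\lambda_i}\,E_i, \qquad [E_i, E_j] = \frac{2\,\lambda_k}{\lambda_i\,\lambda_j}\,E_k
\]
for $(i,j,k)$ a cyclic permutation of $(1,2,3)$.

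Next, Koszul's formula applied to these brackets produces the Levi-Civita connection coefficients $\g(\nabla_{E_a}E_b, E_c)$, which are rational expressions in $\varphi, \psi, \xi, \zeta$ and their $r$-derivatives. Computing $R(E_a, E_b)E_c$ by the standard formula and tracing yields a Ricci tensor that is \emph{diagonal} in $\{E_0, E_1, E_2, E_3\}$: the off-diagonal entries vanish by the same discrete-isometry argument that forced the metric Ansatz to be diagonal, see Subsections~\ref{subsec:s4}--\ref{subsec:mn} and Proposition~\ref{prop:diagonalpreserved}. The $(E_0, E_0)$-component involves only $r$-derivatives of $\varphi, \psi, \xi, \zeta$; each $(E_i, E_i)$-component splits into a second-order ``horizontal'' differential expression in $\lambda_i$ (coming from Jacobi-type terms along $\gamma$), plus the familiar Berger-sphere ``vertical'' term $\tfrac{2(\lambda_i^4 - (\lambda_j^2 - \lambda_k^2)^2)}{\lambda_i^2\,\lambda_j^2\,\lambda_k^2}$.

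Since the metric is diagonal, $\partial_t \g = -2\,\Ric_{\g(t)}$ reduces to the four scalar equations $\zeta_t = -\zeta\,\Ric(E_0, E_0)$ and $(\lambda_i)_t = -\lambda_i\,\Ric(E_i, E_i)$. Substituting the expressions above, and repackaging the first-order $r$-derivative coefficient via the elementary identity
\[
\frac{1}{\zeta\,\lambda_j\,\lambda_k}\!\left(\frac{\lambda_j\,\lambda_k}{\zeta}\right)_{\!\!r} = \frac{1}{\zeta^2}\!\left(\frac{(\lambda_j)_r}{\lambda_j} + \frac{(\lambda_k)_r}{\lambda_k} - \frac{\zeta_r}{\zeta}\right),
\]
yields the system \eqref{eq:RFcohom1}. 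The main obstacle is bookkeeping rather than conceptual: once the brackets above are in hand, the derivation is entirely algorithmic, and Proposition~\ref{prop:diagonalpreserved} spares us from having to separately verify that the off-diagonal Ricci components vanish at the PDE level.
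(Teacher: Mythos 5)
Your proposal follows essentially the same route as the paper's proof: compute the Ricci tensor of the diagonal Ansatz in the adapted frame, observe it is diagonal, and equate components of $\partial_t\g$ with $-2\Ric_{\g(t)}$. The only substantive difference is that the paper obtains the Ricci components by citing Prop.~1.14 of \cite{gz-inventiones} (after substituting $\frac1\zeta\frac{\partial}{\partial r}$ to account for the fact that $\gamma$ is no longer $\g(t)$-arclength parametrized for $t>0$), whereas you rederive them from scratch via the frame brackets and Koszul's formula. This is a more self-contained presentation of the same computation, and your bracket formulas $[E_0,E_i]$ and $[E_i,E_j]$, together with the algebraic identity repackaging the first-order coefficient, are the correct ingredients.

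Two small points worth flagging. First, the diagonality of $\Ric_{\g(t)}$ does not need the discrete isometries of Subsections~\ref{subsec:s4}--\ref{subsec:mn}: the proposition is stated for an arbitrary cohomogeneity one $4$-manifold with $\mathfrak g\cong\mathfrak{su}(2)$, and those extra isometries are not part of its hypotheses. The cleaner observation is that the hypothesis that $\g(t)$ remains diagonal already forces $\Ric_{\g(t)}=-\tfrac12\partial_t\g(t)$ to be diagonal, so you may simply read off the four scalar equations from the diagonal entries. (Alternatively, one can verify directly from the Koszul computation that the Ricci tensor of a metric of the form \eqref{eq:diagonalmetricmoving} is automatically diagonal, but this is not needed once the preservation hypothesis is invoked.) Second, the factor $2$ appearing in the brackets $[E_i,E_j]$, and hence in the vertical term of the Ricci tensor, depends on the normalization of the chosen basis of $\mathfrak g$; $\{I,J,K\}\subset\mathfrak{sp}(1)$ and $\{E_{23},E_{31},E_{12}\}\subset\mathfrak{so}(3)$ have structure constants differing by a factor of $2$. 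To have \eqref{eq:RFcohom1} hold literally as stated, the basis must be normalized consistently (say, so that $[v_i,v_j]=2\epsilon_{ijk}v_k$), and it is worth making that normalization explicit when invoking ``the structure constants of $\mathfrak{su}(2)$.''
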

 
\begin{proof}
The Ricci tensor of \eqref{eq:diagonalmetricmoving} is diagonal on the frame $\left\{\tfrac{\partial}{\partial r},X_1,X_2,X_3\right\}$. It can be computed using \cite[Prop.\ 1.14]{gz-inventiones}, the structure constants of $\mathfrak{su}(2)$, and replacing $\frac{\partial}{\partial r}$ with $\frac{1}{\zeta}\frac{\partial}{\partial r}$ to account for the $\g(t)$-arclength parameter of $\gamma(r)$ for $t>0$, resulting:
\begin{equation*}
\begin{aligned}
\Ric_{\g(t)}\!\left(\tfrac{\partial}{\partial r}, \tfrac{\partial}{\partial r}\right) &= -\frac{\varphi_{rr}\zeta-\varphi_r\zeta_r}{\varphi\zeta^2}-\frac{\psi_{rr}\zeta-\psi_r\zeta_r}{\psi\zeta^2}-\frac{\xi_{rr}\zeta-\xi_r\zeta_r}{\xi\zeta^2}\\
\Ric_{\g(t)}(X_1, X_1) &= \frac{2\varphi^{4}-2(\psi^2-\xi^2)^2}{\psi^{2}\xi^{2}} - \frac{\varphi_r\varphi \psi_r \xi+\varphi_r\varphi \xi_r \psi}{\psi\xi\zeta^{2}} - \frac{\varphi_{rr}\varphi\zeta-\varphi_r\varphi\zeta_r}{\zeta^{3}}
\end{aligned}
\end{equation*}
and expressions analogous to the latter in the directions $X_2$ and $X_3$. Since metric $\g(t)$ and its Ricci tensor $\Ric_{\g(t)}$ are diagonal in the same basis (by Proposition~\ref{prop:diagonalpreserved}), the system \eqref{eq:RFcohom1} is obtained equating the corresponding diagonal entries of $\frac{\partial\g}{\partial t}$ and $-2\Ric_{\g(t)}$. 
\end{proof}

\begin{remark}\label{rem:diagonalpreservation}
A natural question is whether the hypothesis that $\g(t)$ retains the diagonal form \eqref{eq:diagonalmetricmoving} is necessary. For instance, proving existence of solutions to \eqref{eq:RFcohom1} with the appropriate smoothness (boundary) conditions, would, by uniqueness of solutions to Ricci flow,  imply that the diagonal Ansatz is preserved. Nevertheless, these translate into overdetermined boundary conditions for \eqref{eq:RFcohom1}, and determining well-posedness seems to be beyond the reach of standard methods.
\end{remark}

\subsection{Curvature evolution}
We are now ready to analyze the evolution of sectional curvatures of $\ggz$ under Ricci flow, proving the Theorem.

\begin{proof}[Proof of Theorem]
Let $M$ be any of the cohomogeneity one $4$-manifolds discussed in Section~\ref{sec:examples}, and equip it with the Grove-Ziller metric $\ggz$. By Propositions~\ref{prop:diagonalpreserved} and \ref{prop:RFequations}, the Ricci flow evolution of $\g(0)=\ggz$ is through other diagonal metrics of the form \eqref{eq:diagonalmetricmoving}, satisfying \eqref{eq:RFcohom1}.

The initial metric $\g(0)$ is such that, near each singular orbit $B_\pm$, the two functions among $\varphi$, $\psi$, and $\xi$ corresponding to the two noncollapsing directions among $X_1$, $X_2$, and $X_3$ are equal and constant. Up to relabeling, assume these are $X_1$ and $X_2$ near $B_-$, so that 
\begin{equation}\label{eq:constants}
\varphi(r,0)=\psi(r,0)=const.>0, \quad \mbox{for all} \quad 0<r<\varepsilon,
\end{equation}
while $\xi(0,t)=0$ for all $t\geq0$.
Fix $0<r_0<\varepsilon$ and let $\sigma\subset T_{\gamma(r_0)}M$ be the tangent plane spanned by $\frac{\partial}{\partial r}$ and $X_1$. The sectional curvature of $\sigma$ is given by
\begin{equation*}
\sec_{\g(t)}(\sigma)=-\frac{1}{\varphi\zeta}\left(\frac{\varphi_r}{\zeta}\right)_{\!r} =\frac{\varphi_r\zeta_r}{\varphi\zeta^3} -\frac{\varphi_{rr}}{\varphi\zeta^2}\\
\end{equation*}
computed at $r=r_0$.
As a consequence of \eqref{eq:constants}, this plane $\sigma$ is flat at time $t=0$.
Moreover, as $\zeta(r,0)\equiv 1$, we have that
\begin{equation}\label{eq:ddtsec}
\frac{\dd}{\dd t}\sec_{\g(t)}(\sigma)\Big|_{t=0} = -\frac{\varphi_{rrt}}{\varphi}\Big|_{r=r_0,t=0}.
\end{equation}

The evolution equation for $\varphi$ in \eqref{eq:RFcohom1} simplifies enormously due to \eqref{eq:constants}, yielding
\begin{equation*}
\varphi_t\big|_{t=0} = \frac{2(\psi^2-\xi^2)^2 -2\varphi^4}{\varphi\psi^2\xi^2}, \qquad 0<r<\varepsilon.
\end{equation*}
Differentiating the above expression in $r$ twice and using \eqref{eq:constants} once more, we have
\begin{equation*}
\varphi_{rrt}\big|_{r=r_0,t=0} = \frac{4(\xi_r^2 + \xi_{rr}\xi)}{\varphi^3}\Big|_{r=r_0,t=0}.
\end{equation*}
Up to a constant (determined by the ineffective kernel of the action of $X_3$ on the normal disk to $B_-$), the function $\xi(r,t)$ is the length of $\frac{\partial}{\partial \theta}$ for a rotationally symmetric metric $\zeta(r,t)^2\dd r^2+\xi(r,t)^2\dd\theta^2$ on the normal disk to $B_-$ at $x_-$. Thus, by the smoothness conditions for such a metric, $\xi_r=\zeta$ at $r=0$ and $\xi$ must be an odd function of $r$; in particular, $\xi_{rr}(0,t)=0$. Therefore, up to choosing an even smaller $0<r_0<\varepsilon$, we have $\xi_r^2(r_0,0)>0$, while both $\xi(r_0,0)$ and $\xi_{rr}(r_0,0)$ are arbitrarily close to $0$.
It hence follows that \eqref{eq:ddtsec} is strictly negative, so $\sec_{\g(t)}(\sigma)<0$ for all $t>0$ sufficiently small, concluding the proof.
\end{proof}

\begin{remark}\label{rem:final}
More can be said about the evolution of sectional curvatures on the manifolds discussed in the above proof. First, the tangent plane $\sigma$ at $\gamma(r_0)$ could have instead been chosen as the plane spanned by $\frac{\partial}{\partial r}$ and any linear combination of the noncollapsing directions $X_1$ and $X_2$. Of course, a similar situation also takes place near the other singular orbit $B_+$. This means there is a circle's worth of initially flat planes at each point near a singular orbit that become negatively curved for small $t>0$. As a side note, these tangent planes actually integrate to totally geodesic flat strips in $(M,\ggz)$ with an arrangement in the regular part of $M$ reminiscent of an open book decomposition, where the binding is any horizontal geodesic and the ($2$-dimensional) pages are flat strips. These are the so-called \emph{Perelman flat strips}, constructed in the proof of the Soul Conjecture~\cite{perelman-soul}, on either ``half" of $(M,\ggz)$, i.e., on either convex side of a totally geodesic principal orbit.

The behavior of some of these flat planes is the opposite near the middle of $(M,\ggz)$, where they immediately acquire positive curvature for any $t>0$ small. We warn the reader that, in the derivation of formula \eqref{eq:ddtsec} for $\frac{\dd}{\dd t}\sec_{\g(t)}(\sigma)\big|_{t=0}$, we made extensive use of \eqref{eq:constants}, so this expression is not valid on the entire length of $\gamma(r)$, in particular in the latter region. However, the above claim can be verified with an argument similar to \cite[Sec.\ 4.4]{bettiol-pams} using that these planes are tangent to totally geodesic flats, which implies that $\int_{\gamma} \frac{\dd}{\dd t}\sec_{\g(t)}(\gamma'\wedge X)\big|_{t=0}=0$, where $X$ is a vertical direction along $\gamma$ that does not collapse at either singular orbit.
\end{remark}

\end{document}